\theoremstyle{plain}
\newtheorem{theorem}{Theorem}
\newtheorem{proposition}[theorem]{Proposition}
\newtheorem*{theorem*}{Theorem}
\newtheorem*{lemma*}{Lemma}
\newtheorem*{proposition*}{Proposition}
\newtheorem*{corollary*}{Corollary}
\newtheorem*{definition*}{Definition}
\newtheorem*{conjecture*}{Conjecture}
\theoremstyle{definition}
\newtheorem*{remark*}{Remark}
\newtheorem*{keywords}{Keywords}
\begin{document}
\title{Direct image for multiplicative and relative
$K$-theories from transgression of the families index theorem, part 3}
\author{Alain Berthomieu\\\and Universit\'e de Toulouse, C.U.F.R. J.-F. Champollion\and
\!\!\!\!\!\!\!\!I.M.T. (Institut de Math\'ematiques de Toulouse, UMR CNRS n$^\circ$ 5219)\and Campus d'Albi, Place de Verdun,
81012 Albi Cedex, France.
\and{\tt{alain.berthomieu@univ-jfc.fr}}}
\maketitle
\begin{abstract}
This is the final part of the work started in \cite{MoiPartie1}
and \cite{MoiPartie2}, (the ``longstanding forthcoming preprint'' referred to in \cite{MoiOberwolfach}).

Here the question of double fibration is adressed both for relative $K$-theory
and ``free multiplicative'' $K$-theory.

In the case of relative and ``nonfree'' multiplicative $K$-theory, the direct image is proved to be functorial for double submersions.
\begin{keywords} multiplicative $K$-theory, families index, Chern-Simons transgression, proper submersions.
\end{keywords}
\noindent{\textbf{AMS-classification:}} Primary: 14F05, 19E20, 57R20, secondary: 14F40, 19D55, 53C05, 55R50.
\end{abstract}
\section{Introduction:}
This is the final part of the series started in \cite{MoiPartie1} and continued in \cite{MoiPartie2}.

Here, double fibrations are studied, and the functoriality of the direct image
constructed in \cite{MoiPartie1} for relative $K$-theory is
established, while in the case of ``free multiplicative'' $K$-theory,
the functoriality of the direct image constructed in \cite{MoiPartie2}
is proved when restricted to ``nonfree'' multiplicative $K$-theory.

Meanwhile, the work \cite{BunkeSchick} was diffused. The relations
between \cite{BunkeSchick} and \cite{MoiPartie1}, \cite{MoiPartie2} and the
present paper will be explained elsewhere.

Consider two submersions $\pi_1\colon M\longrightarrow B$ and $\pi_2\colon B\longrightarrow S$ and the composed submersion
$\pi_2\circ\pi_1\colon M\longrightarrow S$. The goal of this section is to compare
direct image with respect to $\pi_2\circ\pi_1$ (the ``one step'' direct image) and the composition of the two direct images relative to $\pi_1$ and $\pi_2$
(the ``two step'' direct image) for topological, relative and multiplicative $K$-theories.

The paper is organised as follows: in part \ref{topol}, the
construction of a canonical (``topological'') link (see \S2.1.2 in \cite{MoiPartie1}
for the definition of a ``link'') between representatives
of one step and two step direct images in topological $K$-theory
of some same vector bundle on $M$ is constructed. This needs
some analysis of the spectral behaviour of fibral Dirac operators
in the adiabatic limit.
In section \ref{flat}, the Leray spectral sequence is recalled, and
it is explained how a (``sheaf theoretic'') link between one step and two step direct image
of a flat vector bundle can be obtained from this spectral sequence.
The equivalence of the topological and the sheaf theoretic links is then proved.

The functoriality of the direct image for relative $K$-theory is then a direct consequence of this equivalence result (see section \ref{relat}).

Finally, the caracterisation of the form $\tau$ (performed in \S2.7.2 of
\cite{MoiPartie2})
entering in the definition of the direct image for multiplicative $K$-theory
suffices to establish the functoriality for ``nonfree'' multiplicative $K$-theory (see section \ref{mult}).
\section{Topological $K$-theory:}\label{topol}
The goal of this paragraph is to
construct a canonical link between couples of bundles obtained from the same one, by one step and two steps direct image from $K^0_{\rm{top}}(M)$ to $K^0_{\rm{top}}(S)$.
\subsection{Fibral exterior differential:}\label{doublefibrext}
The respective vertical tangent vector bundles  associated with $\pi_1$, $\pi_2$ and $\pi_2\circ\pi_1$ will be denoted by $TM\!/\!B$, $TB\!/\!S$ and $TM\!/\!S$. Put some vector bundle $\xi$ with connexion $\nabla_{\!\xi}$ on $M$.
Call ${\mathcal E}^\pm_{M/S}$ (resp. ${\mathcal E}^\pm_{M/B}$) the infinite rank vector bundles on $S$ (resp. $B$)
of even/odd degree differential forms along the fibres of $\pi_2\circ\pi_1$ (resp. $\pi_1$) with values in $\xi$. 

Choose any ${\mathcal C}^\infty$ supplementary subbundle $T^H\!M$ of
$TM\!/\!B$ in $TM\!/\!S$. Of course $T^H\!M\cong\pi_1^*TB\!/\!S$. On the fibre of $\pi_2\circ\pi_1$ over any point $s$ of $S$, one obtains for $\xi$-valued differential forms an isomorphism
analogous to formula (37) of \cite{MoiPartie1}:
\begin{equation}\label{E:isomabove}
{\mathcal E}_{M/S}\cong\Omega\big(\pi_2^{-1}(\{s\}),{\mathcal E}_{M/B}\big)\end{equation}

For any $b\in B$ and any tangent vector $U\in T_bB\!/\!S$, call $U^H$ its horizontal lift as a section of $T^H\!M$ over $\pi^{-1}_1(b)$.
For any $s\in S$, the construction of formula (53) of \cite{MoiPartie1} produces a connection 
on the restriction of ${\mathcal E}_{M/B}$ over $\pi^{-1}(\{s\})$ which
will be denoted $\overline\nabla$.
We will denote by $d^H$ the exterior differential operator on
$\Omega\big(\pi_2^{-1}(\{s\}),{\mathcal E}_{M/B}\big)\cong{\mathcal E}_{M/S}$
associated to $\overline\nabla$.

The ``vertical'' differential operator $d^{\nabla_{\!\xi}}$ will be denoted by
$d^{M/B}$ on ${\mathcal E}_{M/B}$ and $d^{M/S}$ on ${\mathcal E}_{M/S}$. As was
remarked in the third paragraph of \S2.1 of \cite{MoiPartie2}, the difference $d^{M/S}-d^{M/B}-d^H$
(throw \eqref{E:isomabove}) is (the restrictions
to fibres of $\pi_2\circ\pi_1$ of) the operator $\iota_T\in\wedge^2(TB\!/\!S)^*\otimes{\text{End}}^{\text{odd}}
({\mathcal E}_{M/B})$ of \S2.1 of \cite{MoiPartie2}. It will be allways denoted $\iota_T$ here, and considered as an element of ${\rm{End}}{\mathcal E}_{M/S}$.
\subsection{Fibral Dirac operators:}
Endow $TM\!/\!B$ with some (riemannian) metric $g^{M/B}$ and $\xi$ with some hermitian metric
$h^\xi$. Take some riemannian metric $g^{B/S}$ on $TB\!/\!S$. Put on $TM\!/\!S$ the riemannian metric for which the decomposition $TM\!/\!S=TM\!/\!B\oplus T^H\!M$ is orthogonal and which co\"\i ncides with $g^{M/B}$ and $\pi_1^*g^{B/S}$ on either parts. The adjoints in ${\rm{End}}{\mathcal E}_{M/S}$
will be considered with respect to the $L^2$ scalar product on ${\mathcal E}_{M/S}$ obtained from $h^\xi$ and this riemannian metric (as in formula (38) of \cite{MoiPartie1}). These are not
the adjoints (neither usual nor special) considered on $\Omega\big(\pi_2^{-1}(\{s\}),{\mathcal E}_{M/B}\big)$
in \S2.2.4 of \cite{MoiPartie1}. For instance, the adjoint $\iota^*_T$ of $\iota_T$ here is not $T\wedge$ as it was in \S2.1 of \cite{MoiPartie2}.

Let $d^{M/B*}$ be the adjoint of $d^{M/B}$ with respect to
$g^{M/B}$ and $h^\xi$ as endomorphisms of ${\mathcal E}_{M/B}$, then $d^{M/B}$ and $d^{M/B*}$ are also adjoint as endomorphisms of ${\mathcal E}_{M/S}$ because of the choice of a riemannian submersion metric on $M$. Take some positive real number $\theta$, call $d^{H*}$ the adjoint of $d^H$ as endomorphism of ${\mathcal E}_{M/S}$ and put:
\begin{equation}\label{VFrontoni}
\begin{aligned}d^\theta=d^H+\frac1\theta d^{M/B}+\theta\iota_T\qquad&{\text{and}}\qquad
d^{\theta*}=d^{H*}+\frac1\theta d^{M/B*}+\theta \iota^*_T\\
{\mathcal D}^H=d^H+d^{H*}\qquad&{\text{and}}\qquad {\mathcal D}^V=d^{M/B}+d^{M/B*}\\
{\mathcal D}^\theta=d^\theta+d^{\theta*}&={\mathcal D}^H+\frac1\theta{\mathcal D}^V+\theta(\iota_T+\iota^*_T)\end{aligned}
\end{equation}
Let $N_V$ be the endomorphism of ${\mathcal E}_{M/B}$ defined as in \S2.5 of \cite{MoiPartie2}. $N_V$
multiplies $k$-degree vertical forms by $k$, ``vertical'' meaning forms along the fibres of $\pi_1$. Using $T^H\!M$, $N_V$ extends to
an operator on ${\mathcal E}_{M/S}$ throw the identification \eqref{E:isomabove}. Let $g_\theta$ be the riemannian metric on $M$ such that $TM\!/\!B$ and $T^H\!M$ are orthogonal and
which restricts to $g^{M/B}$ and $\frac1{\theta^2}\pi_1^*g^B$
on either part, the observation here is that ${\mathcal D}^\theta=\theta^{N_V}(d^{M/S}+d^{M/S*}_\theta)\theta^{-N_V}$
where $d^{M/S*}_\theta$ is the adjoint of $d^{M/S}$ with respect to $g_\theta$ and $h^\xi$ (see \cite{AlBerthomieuBismut} definition 5.5 for an analogous result in the holomorphic context). The riemanian submersion metric chosen here
simplifies considerably the form of ${\mathcal D}^\theta$ with
respect to the case of \cite{AlBerthomieuBismut} where such a choice is not allowed and forces more complicated conjugations than by $\theta^{N_V}$ (see \cite{AlBerthomieuBismut} \S5(a)).
\subsection{Introducing some intermediate suitable data:}
Consider some suitable data $(\eta^+,\eta^-,\psi)$ with respect to $\pi_1$ (and $\xi$ with $\nabla_{\!\xi}$ and $h^\xi$ and $g^{M/B}$).
$\eta^\pm$ are endowed with some hermitian metrics. Choose some connection $\nabla_{\!\eta}$ on $\eta^\pm$ (which respects either part) and the associated de Rham operator ${\mathcal D}^{\nabla_{\!\eta}}=d^{\nabla_{\!\eta}}+(d^{\nabla_{\!\eta}})^*$
on $\Omega(B/S,\eta^\pm)$.
Consider some function $\chi$ as in \S2.3 of \cite{MoiPartie2}. For $\theta\in(0,1]$, one puts
\begin{equation}\label{D+D+D}\begin{aligned}
{\mathcal D}^\theta_{\psi}&={\mathcal D}^\theta+{\mathcal D}^\eta+\frac{1-\chi(\theta)}\theta(\psi+\psi^*)\\&={\mathcal D}^H+{\mathcal D}^\eta+\frac1\theta{\mathcal D}^{V}_{(1-\chi(\theta))\psi}+\theta(\iota_T+\iota_T^*)\quad\!\in\!\quad{\rm{End}}^{\rm{odd}}\left({\mathcal E}_{M/S}\oplus\Omega
(B/S,\eta)\right)\end{aligned}\end{equation}
where ${\mathcal D}^V_{(1-\chi(\theta))\psi}$ is obtained from ${\mathcal D}^V$ and $(1-\chi(\theta))\psi$ as ${\mathcal D}^{\nabla_{\!\xi}}_\psi$ is from ${\mathcal D}^{\nabla_{\!\xi}}$
and $\psi$ in the first formula following (38) in \cite{MoiPartie1}.
Here $\psi$ is extended to forms on $B/S$ throw the isomorphism
\eqref{E:isomabove}. The choice of a riemannian submersion
metric on $TM$ ensures the compatibility of the adjunctions
of $\psi$ before and after extending it to forms on $B/S$.
This result \eqref{D+D+D} corresponds to \cite{AlBerthomieuBismut} proposition 5.9
with $\theta=\frac1T$ and with the extra term $\theta(\iota_T+\iota_T^*)$.

Associated to ${\mathcal D}^V_\psi$ is a double
decomposition
\begin{equation}\label{SarahRegardBleu}{\mathcal E}_{M/B}\oplus\eta^\pm={\rm{Ker}}{\mathcal D}_{\psi}^{V}
\oplus({\rm{Ker}}{\mathcal D}_{\psi}^{V})^\perp\end{equation}
which gives a double decomposition of infinite rank vector bundles on $S$:
\[{\mathcal E}_{M/S}\oplus\Omega
(B/S,\eta^\pm)=\Omega
\big(B/S,{\rm{Ker}}{\mathcal D}^{V}_\psi\big)\oplus
\Omega
\big(B/S,({\rm{Ker}}{\mathcal D}^{V}_\psi)^\perp\big)
\]
The choice of a riemannian submersion metric induces that this decomposition is orthogonal: let $p\colon{\mathcal E}_{M/S}\oplus\Omega
(B/S,\eta^\pm)\longrightarrow\Omega
\big(B/S,{\rm{Ker}}{\mathcal D}_{\psi}^V\big)$ be the orthogonal projection, it is the tensor product of the identity in $\Omega(B/S)$ and the orthogonal projection on the first factor of \eqref{SarahRegardBleu}. Put $p^\perp={\rm{Id}}-p$. For any positive $\theta$
one decomposes the operator ${\mathcal D}_{\psi}^\theta$
as a $2\times2$ matrix:
\[{\mathcal D}^{\theta}_\psi=\left(\begin{matrix}
p{\mathcal D}^{\theta}_{\psi}p&p{\mathcal D}^{\theta}_{\psi}p^\perp\\
p^\perp{\mathcal D}^{\theta}_{\psi}p&p^\perp{\mathcal D}^{\theta}_{\psi}p^\perp\end{matrix}\right)=:\left(
\begin{matrix}A^\theta_1&A^\theta_2\\A^\theta_3&A^\theta_4\end{matrix}\right)\]

As in \S3.3.3 of \cite{MoiPartie1} in the flat case and \S2.3 of \cite{MoiPartie2} in the general case, the vector bundle ${\rm{Ker}}{\mathcal D}^V_\psi$ is endowed with the restriction of the metric on ${\mathcal E}_{M/B}\oplus\eta^\pm$, and with the
connection $\nabla_{\!{\mathcal H}}$ obtained by projecting the connection on ${\mathcal E}_{M/B}\oplus\eta$ onto it (in fact $p(\overline\nabla\oplus\nabla_{\!\eta})p$, see \cite{AlBerthomieuBismut}
theorem 5.1 and formula (5.34)). Because of the compatibility of orthogonal projections, the exterior differential operator on $\Omega\big(B/S,{\rm{Ker}}{\mathcal D}_{\psi}^V\big)$
associated to this connection is $d^{\nabla_{\!{\mathcal H}}}=p(d^H\oplus d^{\nabla_{\!\eta}})p$. Put ${\mathcal D}^{\nabla_{\!{\mathcal H}}}=d^{\nabla_{\!{\mathcal H}}}+(d^{\nabla_{\!{\mathcal H}}})^*$, then clearly
$(d^{\nabla_{\!{\mathcal H}}})^*=p\big(d^{H*}\oplus (d^{\nabla_{\!\eta}})^*\big)p$ and
for any $\theta\leq\frac12$ (to ensure that $\chi(\theta)=0$), one has
\begin{equation}\label{convergop}
A^\theta_1={\mathcal D}^{\nabla_{\!{\mathcal H}}}+\theta p(\iota_T+\iota_T^*)p
\end{equation}
Of course $p(\iota_T+\iota_T^*)p$ is a bounded operator in the $L^2$-topology, and this remark with the above equation replaces here equation (5.35) of theorem 5.1 in \cite{AlBerthomieuBismut}.

In the same way, for $\theta\in[0,\frac12]$
\begin{equation}\label{nondiag}
\begin{aligned}
A_2^\theta&=p\big((d^H+d^{H*})\oplus (d^{\nabla_{\!\eta}}+(d^{\nabla_{\!\eta}})^*)\big)p^\perp+\theta p(\iota_T+\iota_T^*)p^\perp\\
{\text{and }}\qquad
A_3^\theta&=p^\perp\big((d^H+d^{H*})\oplus (d^{\nabla_{\!\eta}}+(d^{\nabla_{\!\eta}})^*)\big)p+\theta p^\perp(\iota_T+\iota_T^*)p\end{aligned}\end{equation}
are uniformly bounded operators in the $L^2$-topology.
This is because of the choice of the riemannian submersion metric
and is a simplification with respect to the corresponding result
proposition 5.18 of \cite{AlBerthomieuBismut}.
\subsection{Estimates on the operator $A^\theta_4$:}
First one wants to obtain results analoguous to \cite{AlBerthomieuBismut}
theorems 5.19 and 5.20.
There are three differences between the situation here and \cite{AlBerthomieuBismut}.
The absence of conjugation (by $C_T$ in \cite{AlBerthomieuBismut} definition 5.4 and (5.10)) due to the choice of a riemannian submersion metric is a simplification
and does not create any obstacle;
the presence here of the term $\theta (\iota_T+\iota_T^*)$ does not change these results
because of the fact that $\iota_T+\iota_T^*$ is a bounded operator in the
$L^2$-topology and because of its factor $\theta$; more seriously, the commutator
$[D^Z_\infty,D^H_\infty]$ in \cite{AlBerthomieuBismut} which corresponds to
$[{\mathcal D}^V,{\mathcal D}^H]$ in the notations here, is to be replaced by
\[[{\mathcal D}^V_\psi,{\mathcal D}^H+{\mathcal D}^\eta]=
[{\mathcal D}^V,{\mathcal D}^H]+[
\psi+\psi^*,{\mathcal D}^H+{\mathcal D}^\eta]\]
Of course the first term has the required majoration property
\cite{AlBerthomieuBismut} (5.67).
The operator 
$\psi+\psi^*$ is a fiberwise kernel operator (along the fibres of $\pi_1$),
and its kernel is smooth along the fibered double $M\times_B M$. Thus,
if ${\tt v}$ is a smooth vector field on $B$, the commutator
$\big[\psi+\psi^*,\nabla^{\wedge^\bullet T^*M/B\otimes\xi}_{\!{\tt v}^H}\oplus\nabla^\eta_{\!\tt v}\big]$
(where ${\tt v}^H$ is the horizontal lift of ${\tt v}$, a section of $T^H\!M$),
is a fiberwise kernel operator with globally smooth kernel. In particular, it is
bounded in $L^2$-topology, and so is the (super)commutator $[
\psi+\psi^*,{\mathcal D}^H+{\mathcal D}^\eta]$.
The estimate \cite{AlBerthomieuBismut} (5.67) then follows from
\cite{AlBerthomieuBismut} (5.61) (whose equivalent here holds true).

The conclusions of theorems 5.19
and 5.20 of \cite{AlBerthomieuBismut} remain thus valid here,
namely the existence of some constant $C$ such that for any $\theta\leq\frac12$ and any section $s$ of ${\mathcal E}_{M/S}\oplus\eta^\pm$
\begin{equation}\label{pub}
\Vert A^\theta_4(p^\perp s)\Vert_{L^2}\geq C\left(\Vert p^\perp s\Vert_{H^1}+\frac 1{\theta}\Vert p^\perp s\Vert_{L^2}\right)
\end{equation}
where $\Vert\ \, \Vert_{H^1}$ stands for the usual Sobolev $H^1$-norm.

Secondly, one needs some equivalent of \cite{AlBerthomieuBismut} proposition
5.22, particularly the estimate (5.71) contained in it.
But the proof here is in fact easier than in \cite{AlBerthomieuBismut}
because equation \eqref{nondiag} provides a simplification of the
corresponding proposition 5.18 in \cite{AlBerthomieuBismut}, the
extra term $\theta(\iota_T+\iota_T^*)$ is a uniformly bounded operator,
$(\psi+\psi^*)$ too, and $\frac1\theta(\psi+\psi^*)$ is part of $A^\theta_4$,
it does not disable the ellipticity of $A^\theta$
and it is taken into account in the obtained estimates: there exist constants
$c$, $C$ and $\theta_0$ such that for any $\theta\leq\theta_0$, $\lambda\in{\mathbb C}$ such that $\vert\lambda\vert\leq\frac c{2\theta}$ and any $s\in{\mathcal E}_{M/S}
\oplus\eta^\pm$,
\begin{equation}\label{at4}
\begin{aligned}
\Vert(\lambda-A^\theta_4)^{-1}p^\perp s\Vert_{L^2}&\leq C\theta\Vert p^\perp s\Vert_{L^2}\\
\Vert(\lambda-A^\theta_4)^{-1}p^\perp s\Vert_{H^1}&\leq C\Vert p^\perp s\Vert_{L^2}
\end{aligned}
\end{equation}
\subsection{Two-step direct image representative:}\label{VirginieFrontoni}
Choose now some suitable data $(\zeta^+,\zeta^-,\varphi)$ for ${\mathcal D}^{\nabla_{\!{\mathcal H}}}$, and extend $p$ and $p^\perp$ to ${\mathcal E}_{M/S}\oplus\zeta^\pm$ in the following way: $p$ induces the identity on $\zeta^\pm$ and $p^\perp$ induces the null map on $\zeta^\pm$.
Consider then 
\begin{align*}{\mathcal D}^\theta_{\psi,\varphi}&=\left(\begin{matrix}
p\big({\mathcal D}^{\theta}_{\psi}+(1-\chi(\theta))(\varphi+\varphi^*)\big)p&p{\mathcal D}^{\theta}_{\psi}p^\perp\\
p^\perp{\mathcal D}^{\theta}_{\psi}p&p^\perp{\mathcal D}^{\theta}_{\psi}p^\perp\end{matrix}\right)\\&=\left(
\begin{matrix}A^\theta_1+(1-\chi(\theta))(\varphi+\varphi^*) &A^\theta_2\\A^\theta_3&A^\theta_4\end{matrix}\right)\end{align*}
(It is not essential that the same function $\chi$ appears here and in \eqref{D+D+D}). Equation \eqref{convergop} obviously leads to
the following equality for $\theta\in[0,\frac12]$:
\begin{equation}\label{convergopphi}A^\theta_1+(1-\chi(\theta))(\varphi+\varphi^*)={\mathcal D}^{\nabla_{\!{\mathcal H}}}\!+\varphi+\varphi^*+\theta p(\iota_T+\iota_T^*)p
={\mathcal D}^{\nabla_{\!{\mathcal H}}}_\varphi\!+\theta p(\iota_T+\iota_T^*)p
\end{equation}
with the same remark (as after \eqref{convergop}) that $p(\iota_T+\iota_T^*)p$ is bounded.

Using this, the remark after \eqref{nondiag} above and \eqref{at4} instead of
\cite{AlBerthomieuBismut}(5.35), (5.49) and (5.71) respectively,
the analysis performed in \cite{AlBerthomieuBismut} \S\S5(d) and (g)
applies here. The only difference is that the following equivalent here of
the first line of \cite{AlBerthomieuBismut} (5.89) (for the usual norm of bounded operators in $L^2$-topology) is not true
\begin{equation}\label{estimeeverolee}\left\Vert\big(A^\theta_1+(1-\chi(\theta))(\varphi+\varphi^*)-{\mathcal D}^{\nabla_{\!{\mathcal H}}}_\varphi\big)(\lambda-{\mathcal D}^{\nabla_{\!{\mathcal H}}}_\varphi)^{-1}\right\Vert\leq C\theta^2(1+\vert\lambda\vert)\end{equation}
The set $U_T$ (or $U_{\frac1\theta}$) where $\lambda$ is supposed to lie, defined in \cite{AlBerthomieuBismut} (5.76), is such that
$\vert\lambda\vert\leq c_1T$ (or $\frac{c_1}\theta$) and $\Vert(\lambda-{\mathcal D}^{\nabla_{\!{\mathcal H}}}_\varphi)^{-1}\Vert\leq\frac{c_2}4$ for some constants $c_1$ and $c_2$. But only the following consequence of \eqref{estimeeverolee}
\[\left\Vert\big(A^\theta_1+(1-\chi(\theta))(\varphi+\varphi^*)-{\mathcal D}^{\nabla_{\!{\mathcal H}}}_\varphi\big)(\lambda-{\mathcal D}^{\nabla_{\!{\mathcal H}}}_\varphi)^{-1}\right\Vert\leq C\theta\]
is needed for establishing the equivalent of \cite{AlBerthomieuBismut}
(5.90). This last 
estimate can be obtained here directly from the remark after \eqref{convergopphi}
and the properties of $U_{\frac1\theta}$.

One obtains firstly the convergence of the resolvent of ${\mathcal D}^{\theta}_{\psi,\varphi}$ to any
great enough positive integral power $(\lambda-{\mathcal D}^{\theta}_{\psi,\varphi})^{-k}$
to $p(\lambda-{\mathcal D}^{\nabla_{\!{\mathcal H}}}_\varphi)^{-k}p$ in the sense of
the norm $\vert\!\vert A\vert\!\vert_1={\rm{tr}}(A^*A)^{\frac12}$
(\cite{AlBerthomieuBismut} theorem 5.28), and secondly the convergence of the spectral projector of ${\mathcal D}^{\theta}_{\psi,\varphi}$ with respect to eigenvalues of absolute value bounded
by some suitable positive constant $a$ to the orthogonal projector onto the
kernel of ${\mathcal D}^{\nabla_{\!{\mathcal H}}}_\varphi$ (\cite{AlBerthomieuBismut} equation (5.118)).


Thus the eigensections of ${\mathcal D}^{\theta}_{\psi,\varphi}$ corresponding to eigenvalues of absolute value bounded
by $a$ and the kernel of ${\mathcal D}^{\nabla_{\!{\mathcal H}}}_\varphi$ make vector bundles ${\mathcal G}^\pm$ over
$S\times[0,\varepsilon_2]$ for all sufficiently small (but positive) $\varepsilon_2$.
And for any $t\in[0,\varepsilon_2]$,
\[[{\mathcal G}^+\vert_{\{t\}}]-[{\mathcal G}^-\vert_{\{t\}}]
=\pi^{\rm{Eu}}_{2*}\big(\pi_{1*}^{\rm{Eu}}([\xi])+[\eta^+]-[\eta^-]\big)+[\zeta^+]-[\zeta^-]\ 
\in\ K^0_{\rm{top}}(S)\]
\begin{remark*}
Of course the convergence holds for the squared operators $\left({\mathcal D}^{\theta}_{\psi,\varphi}\right)^2$ and $\left({\mathcal D}^{\nabla_{\!{\mathcal H}}}_\varphi\right)^2$ too. In this case, the eigenspaces decompose
along the ${\mathbb Z}/2$-graduation decomposition of ${\mathcal E}_{M/S}\oplus\Omega
(B/S,\eta)\oplus\zeta$, and for a nonzero eigenvalue of $\left({\mathcal D}^{\theta}_{\psi,\varphi}\right)^2$, the nonsquared operator ${\mathcal D}^{\theta}_{\psi,\varphi}$ induces a bijection between the two parts
of the eigenspace. 
This proves that if ${\mathcal D}^{\nabla_{\!{\mathcal H}}}_\varphi$ is surjective on ${\mathcal E}^-_{B/S}\oplus\zeta^-$,
then ${\mathcal D}^{\theta}_{\psi,\varphi}$ has no nonvanishing little absolute valued eigenvalue for $\theta$ little enough,
so that
the kernel of ${\mathcal D}^{\theta}_{\psi,\varphi}$ converges to the kernel of
${\mathcal D}_\varphi^{\nabla_{\!{\mathcal H}}}$ and these kernels directly provide a vector bundle
on $S\times[0,\varepsilon_2]$.
\end{remark*}
\subsection{Link between one-step and two-step direct image representatives:}\label{llink}
The constructions of \S3.1 of \cite{MoiPartie1} (both the families analytic index map and the canonical link between different constructions) can be applied to ${\mathcal D}^\theta_\psi$ on any compact subset of $S\times(0,1]$. This is because
${\mathcal D}^\theta_\psi$ is a compact modification of the fibral elliptic
operator ${\mathcal D}^\theta\oplus{\mathcal D}^\eta$.


This does not work on $[0,1]$ because of the explosion of $A^\theta_4$ \eqref{pub}.

Choose $\varepsilon_1\in(0,\varepsilon_2)$ and suitable data $(\lambda^+,\lambda^-,\phi)$ for ${\mathcal D}^{\theta}_\psi$ with respect to the submersion $(\pi_2\circ\pi_1)\times{\rm{Id}}_{[\varepsilon_1,1]}\colon M\times[\varepsilon_1,1]\longrightarrow S\times[\varepsilon_1,1]$.
One then obtains two vector bundles ${\mathcal K}^\pm$ on
$S\times[\varepsilon_1,1]$ such that for any $\theta\in[\varepsilon_1,1]$
\[[{\mathcal K}^+_{S\times\{\theta\}}]-[{\mathcal K}^-_{S\times\{\theta\}}]=(\pi_2\circ\pi_1)^{\rm{Eu}}_*[\xi]+\pi_{2*}^{\rm{Eu}}([\eta^+]-[\eta^-])+[\lambda^+]-[\lambda^-]\in K^0_{\text{top}}(S)\]
This is obtained on $S\times\{1\}$
from the regular construction of \S3.1.1 of \cite{MoiPartie1}
and away from $\{1\}$ by parallel transport along $[\varepsilon_1,1]$.

Because $\varepsilon_1<\varepsilon_2$, and of the considerations above,
there is from \S3.1.2 of \cite{MoiPartie1} some canonical link between $({\mathcal G}^+
\oplus\zeta^-)
-({\mathcal G}^-\oplus\zeta^+)$ and $({\mathcal K}^+\oplus\lambda^-)-({\mathcal K}^-\oplus\lambda^+)$ over $B\times[\varepsilon_1,\varepsilon_2]$.
Represent $\pi_{2*}^{\rm{Eu}}([\eta^+]-[\eta^-])\in K^0_{\rm{top}}(S)$ by any difference
$[\mu^+]-[\mu^-]$ of vector bundles on $S$, one obtains (by adding the identity of $\mu^-$ and of
$\mu^+$) a link between $({\mathcal G}^+
\oplus\zeta^-\oplus\mu^-)
-({\mathcal G}^-\oplus\zeta^+\oplus\mu^-)$ which is a representative of
$\pi_{2*}^{\rm{Eu}}\big(\pi_{1*}^{\rm{Eu}}(\xi)\big)$, and $({\mathcal K}^+\oplus\lambda^-\oplus\mu^-)-({\mathcal K}^-\oplus\lambda^+\oplus\mu^+)$
which is a representative of
$(\pi_2\circ\pi_1)_*^{\rm{Eu}}(\xi)$
\subsection{Independence on the choices:}
Consider two systems of suitable data $(\zeta_1^+,\zeta_1^-,\varphi_1)$ and $(\zeta_2^+,\zeta_2^-,\varphi_2)$ for
${\mathcal D}^{\nabla_{\!{\mathcal H}}}$. Then
$({\rm{Ker}}
{\mathcal D}^{\nabla_{\!{\mathcal H}}+}_{\varphi_1}\oplus\zeta^-_1)-({\rm{Ker}}
{\mathcal D}^{\nabla_{\!{\mathcal H}}-}_{\varphi_1}\oplus\zeta^+_1)$ and $({\rm{Ker}}
{\mathcal D}^{\nabla_{\!{\mathcal H}}+}_{\varphi_2}\oplus\zeta^-_2)-({\rm{Ker}}
{\mathcal D}^{\nabla_{\!{\mathcal H}}-}_{\varphi_2}\oplus\zeta^+_2)$
are canonically linked from the construction of \S3.1.2 of \cite{MoiPartie1}, and this link
extends to some link between $({\mathcal G}^+\vert_{\{t\}}\oplus\zeta^-_1)-({\mathcal G}^-\vert_{\{t\}}\oplus\zeta^+_1)$ and $({\mathcal G}^+\vert_{\{t\}}\oplus\zeta^-_2)-({\mathcal G}^-\vert_{\{t\}}\oplus\zeta^+_2)$
for any $t$. But both this one and the one between $({\mathcal G}^+
\oplus\zeta^-)
-({\mathcal G}^-\oplus\zeta^+)$ and $({\mathcal K}^+\oplus\lambda^-)-({\mathcal K}^-\oplus\lambda^+)$ used just above are obtained from the construction of
\S3.1.2 of \cite{MoiPartie1}, they are thus compatible.

Now take two systems of suitable data $(\eta_1^+,\eta_1^-,\psi_1)$ and $(\eta_2^+,\eta_2^-,\psi_2)$ with respect to $\pi_1$ (and $\xi$ with $\nabla_{\!\xi}$ and $h^\xi$ and $g^{M/B}$).
There is a link between $\big({\rm{Ker}}
{\mathcal D}_{M/B}^{\psi_1+}\oplus\eta_1^-\big)-\big(\eta_1^+\oplus{\rm{Ker}}
{\mathcal D}_{M/B}^{\psi_1-}\big)$ and $\big({\rm{Ker}}
{\mathcal D}_{M/B}^{\psi_2+}\oplus\eta^-_2\big)-\big(\eta_2^+\oplus{\rm{Ker}}
{\mathcal D}_{M/B}^{\psi_2-}\big)$ as constructed in \S3.1.2 of \cite{MoiPartie1}.
This link is obtained by constructing a families index map for
a submersion of the form $\pi_1\times{\rm{Id}}_{[0,1]}\colon M\times[0,1]
\longrightarrow B\times[0,1]$. This construction can be extended to the case of a double
submersion in the following form $M\times[0,1]\overset{\pi_1\times{\rm{Id}}_{[0,1]}}
\longrightarrow B\times[0,1]\overset{\pi_2\times{\rm{Id}}_{[0,1]}}
\longrightarrow S\times[0,1]$, and the compatibility of canonical
links either for linked data or for one and for two submersions follows.

\section{Flat $K$-theory:}\label{flat}
\subsection{Leray spectral sequence:}\label{Nadia}
Consider some flat vector bundle $(F,\nabla_{\!F})$ on $M$, let $G^\bullet
=H^\bullet(M/B,F\vert_{M/B})$ be the graded flat bundle on $B$ of the $F$-valued
cohomology of the fibres of $\pi_1$, (with flat connections $\nabla_{\!G^\bullet}$) and $H^\bullet
=H^\bullet(M/S,F\vert_{M/S})$ be the graded flat bundle on $S$ of the $F$-valued
cohomology of the fibres of $\pi_2\circ\pi_1$ (with flat connections $\nabla_{\!H^\bullet}$; of course in the notation of section 3.3 of \cite{MoiPartie1} $\pi_{1!}^+F=
\mathop\oplus\limits_{i{\text{ even}}}G^i$, $\pi_{1!}^-F=
\mathop\oplus\limits_{i{\text{ odd}}}G^i$ and similarly for $H^\bullet$
and $(\pi_2\circ\pi_1)_!F$).

The vertical $F$-valued de Rham complex $\Omega^\bullet(M/S,F)$ along $M/S$ is filtrated
by the horizontal degree: for any $p$, $F^p\Omega^\bullet(M/S,F)$ consists
of $F$-valued differential forms whose interior product with more than $p$
elements of $TM/B$ vanishes.
Thus $H^\bullet$ is also filtrated from this filtration: $F^pH^\bullet$
consists of classes which can be represented by some element in 
$F^p\Omega^\bullet(M/S,F)$. This filtration is compatible with the flat
connections of $H^\bullet$, so that for any $p$ and $k$,
\begin{equation}\label{Christine}0\longrightarrow F^{p+1}H^k\longrightarrow F^pH^k
\longrightarrow F^pH^k/F^{p+1}H^k\longrightarrow0\end{equation}
is an exact sequence in the category of flat bundles. The corresponding flat connections will be respectively denoted by $\nabla_{\!F^{p+1}H^k}$, $\nabla_{\!F^{p}H^k}$ and $\nabla_{\!p/k}$.

It is proved in \cite{Ma} Proposition 3.1 that the associated spectral sequence gives rise to
flat vector bundles $(E^{p,q}_r,\nabla^{p,q}_r)$ on $S$ with flat (parallel)
spectral sequence morphisms $d_r\colon E^{p,q}_r\longrightarrow E^{p-r,q+r+1}_r$
(and $E_{r+1}^{p,q}={\rm{Ker}}d_r\vert_{E^{p,q}_r}\big/({\rm{Im}}d_r\cap E^{p,q}_r)$).

It is a classical fact (see \cite{Ma} theorem 2.1) that this spectral sequence is isomorphic to the
Leray spectral sequence, and thus
$E^{p,q}_2\cong H^p(B/S,G^q)$ while $E^{p,q}_r\cong
F^pH^{p+q}\big/F^{p+1}H^{p+q}$ for all sufficiently great $r$.

Put $E^+_r=\mathop\oplus\limits_{p+q{\text{ even}}}E^{p,q}_r$ and
$E^-_r=\mathop\oplus\limits_{p+q{\text{ odd}}}E^{p,q}_r$, and denote their direct sum (flat) connections by $\nabla^+_{\!r}$ and $\nabla^-_{\!r}$. The construction of
formulae (5) and (6) of \cite{MoiPartie1} for the complex associated to $d_r$ produces for any $r\geq 2$
a link between $E^+_r-E^-_r$ and $E^+_{r+1}-E^-_{r+1}$. Decomposing this complex in
direct sums of short exact sequences in the classical way proves that this link
is a sum of links of the form appearing in relation $(iii)$ of definition
1 in \cite{MoiPartie1}, so that the element $[E^+_r,\nabla^+_{\!r}]
-[E^-_r,\nabla^-_{\!r}]\in K^0_{\rm{flat}}(S)$ is independent of $r$.

For $r=2$, this is nothing but $\pi_{2!}([G^+,\nabla_{\!G^+}]-[G^-,\nabla_{\!G^-}])
=\pi_{2!}\big(\pi_{1!}[F,\nabla_{\!F}]\big)$.

In the other hand, it follows from \eqref{Christine} that the element
\[[F^pH^\bullet,\nabla_{\!F^pH^\bullet}]+\sum_{i=0}^{p-1}[F^iH^\bullet/F^{i+1}H^\bullet,\nabla_{\!i/\bullet}]
\in K^0_{\rm{flat}}(S)\]
is independent of $p$. For $p=0$, it equals $[H^\bullet,\nabla_{\!H}]=
(\pi_2\circ\pi_1)_![F,\nabla_{\!F}]$, while for sufficienly great $p$ and $r$, it equals $[E^+_r,\nabla^+_{\!r}]
-[E^-_r,\nabla^-_{\!r}]$. Thus
\begin{proposition*}
$\pi_{2!}\circ\pi_{1!}=(\pi_2\circ\pi_1)_!\ \colon\ K^0_{\rm{flat}}(M)\longrightarrow K^0_{\rm{flat}}(S)$.
\end{proposition*}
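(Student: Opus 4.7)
The plan is to prove the proposition by exhibiting both $\pi_{2!}\pi_{1!}[F,\nabla_{\!F}]$ and $(\pi_2\circ\pi_1)_![F,\nabla_{\!F}]$ as endpoints of two chains of equalities in $K^0_{\rm{flat}}(S)$, both built from the Leray spectral sequence of $F$ along $\pi_2\circ\pi_1$, and then matching the ``far'' endpoints at the abutment $E_\infty\cong {\rm gr}\,H^\bullet$.

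\emph{First chain (pages of the spectral sequence).} For $r\geq 2$ set $a_r := [E^+_r,\nabla^+_{\!r}]-[E^-_r,\nabla^-_{\!r}]$. Since $d_r$ is a flat differential on $E_r$ (\cite{Ma} Proposition 3.1) whose cohomology is $E_{r+1}$, decomposing $(E_r,d_r)$ in the classical way as a direct sum of the short exact sequences $0\to \ker d_r\to E_r\to \operatorname{im}d_r\to 0$ and $0\to\operatorname{im}d_r\to\ker d_r\to E_{r+1}\to 0$ in the category of flat bundles realizes the link between $a_r$ and $a_{r+1}$ as a sum of relations of type $(iii)$ of definition 1 in \cite{MoiPartie1}, so $a_r=a_{r+1}$. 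At $r=2$, the Leray identification $E_2^{p,q}\cong H^p(B/S,G^q)$ of flat bundles gives $a_2=\pi_{2!}\big([G^+,\nabla_{\!G^+}]-[G^-,\nabla_{\!G^-}]\big)=\pi_{2!}\pi_{1!}[F,\nabla_{\!F}]$.

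\emph{Second chain (Hodge-type filtration).} For $p\geq 0$ set $b_p := [F^pH^\bullet,\nabla_{\!F^pH^\bullet}]+\sum_{i=0}^{p-1}[F^iH^\bullet/F^{i+1}H^\bullet,\nabla_{\!i/\bullet}]$. The short exact sequence \eqref{Christine} is itself a relation of type $(iii)$, hence $b_p=b_{p+1}$. At $p=0$ the sum is empty and $b_0=[H^\bullet,\nabla_{\!H^\bullet}]=(\pi_2\circ\pi_1)_![F,\nabla_{\!F}]$. Matching the two chains: choose $r$ and $p$ both large enough that the spectral sequence has converged and $F^pH^\bullet=0$; then $b_p$ is the alternating sum of the successive quotients $F^iH^\bullet/F^{i+1}H^\bullet$, which equals $a_r$ via the flat isomorphism $E_\infty^{p,q}\cong F^pH^{p+q}/F^{p+1}H^{p+q}$. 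Chaining everything, $\pi_{2!}\pi_{1!}[F,\nabla_{\!F}]=a_2=a_r=b_p=b_0=(\pi_2\circ\pi_1)_![F,\nabla_{\!F}]$.

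\emph{Where the effort lies.} The only non-trivial bookkeeping is to verify that each short exact sequence appearing in the two chains is genuinely exact in the category of \emph{flat} bundles (not merely smooth ones), so that relation $(iii)$ is applicable: for the filtration this is precisely the content of \eqref{Christine}, while for the spectral-sequence chain it reduces to checking that $\ker d_r$ and $\operatorname{im}d_r$ inherit well-defined flat subconnections from $\nabla^{p,q}_r$, which follows from the flatness of $d_r$ itself (\cite{Ma} Proposition 3.1). No further analytic input is needed; in particular, no adiabatic-limit argument intervenes here, in contrast to the topological case of \S\ref{topol}.
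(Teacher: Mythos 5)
Your proposal is correct and follows essentially the same argument as the paper: the same two telescoping chains (one along the pages $E_r$ using the flatness of $d_r$ from \cite{Ma} Proposition 3.1 and relation $(iii)$, one along the filtration $F^pH^\bullet$ using \eqref{Christine}), matched at the abutment $E_\infty\cong\mathrm{gr}\,H^\bullet$. The only difference is expository --- you give the two chains explicit names $a_r$, $b_p$ and spell out the decomposition of $(E_r,d_r)$ into short exact sequences, which the paper leaves to the reader as ``the classical way.''
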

\subsection{Compatibility of topological and sheaf theoretic links:}
Two links between $E_2^+-E_2^-\cong\pi_{2!}\big(\pi_{1!}[F,\nabla_{\!F}]\big)$
and $H^+-H^-\cong(\pi_2\circ\pi_1)_![F,\nabla_{\!F}]$ are now available: the link $[\ell_{\rm{top}}]$
constructed in subsection \ref{llink} and the sheaf theoretic one $[\ell_{\rm{flat}}]$
constructed just above from the Leray spectral sequence and the filtration
of $H^\bullet$ \eqref{Christine}.
\begin{proposition}\label{flatop}$\qquad[\ell_{\rm{top}}]=[\ell_{\rm{flat}}]$
\end{proposition}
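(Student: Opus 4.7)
The plan is to factor the topological link $[\ell_{\rm top}]$ through all the intermediate pages $E_r^\pm$ of the Leray spectral sequence and to match each factor with the corresponding sub-link appearing in $[\ell_{\rm flat}]$. The compatibility of the link construction of \S3.1.2 of \cite{MoiPartie1} for sums of short exact sequences then reduces the problem to identifying each successive sublink. As a preliminary simplification I would choose the suitable data $(\eta^\pm,\psi)$ and $(\zeta^\pm,\varphi)$ either to vanish or to be arbitrarily small perturbations; this is permitted in the flat case because the relevant kernels form genuine bundles. Then the kernel of ${\mathcal D}^V$ is fibrewise Hodge-identified with $G^\bullet=H^\bullet(M/B,F|_{M/B})$, and the kernel of ${\mathcal D}^{\nabla_{\!{\mathcal H}}}$ with $H^p(B/S,G^q)\cong E_2^{p,q}$. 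Under these identifications, the bundle ${\mathcal G}^\pm$ of \S\ref{VirginieFrontoni} directly represents $E_2^\pm$, so that the question becomes whether the analytic link between ${\mathcal G}^\pm$ and ${\mathcal K}^\pm$ coming from the adiabatic limit agrees with the algebraic link through the successive pages $E_r$ and through the filtration of $H^\bullet$.

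The analytic input would be an adiabatic-limit analysis in the spirit of Mazzeo--Melrose and Dai: as $\theta\to 0$, the small eigenvalues of $({\mathcal D}^\theta_\psi)^2$ cluster on scales of order $\theta^{2r-2}$ for $r\geq 2$, and the renormalised spectral projector onto eigenvalues of size at most $\theta^{2r-2}$ converges to an orthogonal projector whose range is canonically isomorphic to $E_r^{p,q}$; moreover the restriction of ${\mathcal D}^\theta_\psi$ to the $\theta^{r-1}$-scale range is asymptotic to the spectral sequence differential $d_r$. Granted this, I would introduce, for each $r\geq 2$, intermediate vector bundles ${\mathcal Q}_r^\pm$ on $S\times[0,\varepsilon_r]$ defined by this spectral projector, and apply the index construction of \S3.1.1 of \cite{MoiPartie1} to the restriction of ${\mathcal D}^\theta_\psi$ to the orthogonal complements of ${\mathcal Q}_{r+1}^\pm$ inside ${\mathcal Q}_r^\pm$ in order to extract a sub-link between ${\mathcal Q}_r^\pm$ and ${\mathcal Q}_{r+1}^\pm$. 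By the asymptotic identification this sub-link is exactly the one produced in \S\ref{Nadia} from the $d_r$-complex, and by the \S3.1.2 compatibility the concatenation of these sublinks over $r=2,3,\dots$ reproduces the spectral-sequence part of $[\ell_{\rm flat}]$.

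For $r$ sufficiently large one has $E_r^{p,q}=E_\infty^{p,q}\cong F^pH^{p+q}/F^{p+1}H^{p+q}$, and the remaining step is to identify the residual adiabatic correspondence between ${\mathcal K}^\pm$ and ${\mathcal Q}_\infty^\pm$ with the sequence of extensions \eqref{Christine}; this would follow from a filtered Hodge argument matching the natural ``order of smallness'' filtration on $\ker{\mathcal D}^\theta_\psi$ with the filtration $F^\bullet H^\bullet$ by horizontal degree. Gluing all the sublinks and invoking once more the \S3.1.2 compatibility for sums of short exact sequences then yields the desired equality $[\ell_{\rm top}]=[\ell_{\rm flat}]$. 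The main obstacle is the analytic identification at scale $\theta^{r-1}$ of the eigenbundle with $E_r^{p,q}$ and of the induced operator with $d_r$: this is essentially Dai's theorem, but one must verify that the simplifications allowed by the riemannian submersion metric and the bounds \eqref{convergop}--\eqref{at4}, together with the compact perturbations by $\psi+\psi^*$ and $\varphi+\varphi^*$, neither enlarge the small-eigenvalue clusters nor disturb the identification of the effective operator on each cluster with the spectral sequence differential.
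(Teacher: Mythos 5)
Your proposal is correct and follows essentially the same strategy as the paper: both rely on the adiabatic (Mazzeo--Melrose/Dai/Ma) realisation of the Leray spectral sequence inside the small-eigenvalue spectrum of ${\mathcal D}^\theta$, the convergence of the rescaled operator $(\tfrac1\theta)^{r-1}{\mathcal D}^\theta$ to $\widetilde{\mathcal D}_r$ to match the successive $d_r$-sublinks, and a Hodge/filtration argument to match the parallel transport of $\ker{\mathcal D}^\theta$ down to $\theta=0$ with the extensions \eqref{Christine}. The paper takes $\psi$ and $\eta$ to vanish (as you anticipate is permissible in the flat case), invokes \cite{Ma} Theorem~2.2 for the resolvent convergence, and uses the explicit description \eqref{Virginie} of $\widetilde E_r$ from \cite{Ma} Proposition~2.1 together with the explicitly closed form $s_0+\theta s_1^{p+1,q-1}+\cdots$ to pin down the $E_\infty\!\to\!H^\bullet$ correspondence; your ``filtered Hodge argument'' is this same step, phrased more loosely. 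The only organisational difference is that you factor the link through a family of intermediate bundles ${\mathcal Q}_r^\pm$ over $S\times[0,\varepsilon_r]$, whereas the paper works with the single bundle ${\mathcal G}$ over $S\times[0,\varepsilon_1]$ (whose fibre at $\theta=0$ is the direct sum of the $\widetilde E_r$) and identifies the single topological link $({\mathcal G},d^\theta)\rightsquigarrow{\mathcal K}$ with the concatenation of the sheaf-theoretic ones in the $\theta\to 0$ limit; this is a matter of presentation, not substance.
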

\begin{proof}
We will use the Hodge theoretic version of the Leray spectral
sequence (at $\theta=0$). Such a theory was studied by various authors
in various contexts
\cite{MazzeoMelrose} \cite{ThesedeDai} \cite{AlBerthomieuBismut} \cite{Ma},
the version corresponding to the situation here in explained in \cite{Ma}
\S2 and \S3.
It can be summarized as follows: $E_0$ is nothing but ${\mathcal E}_{M/S}$ (see \eqref{E:isomabove}) as global infinite rank vector bundle over $S$.
Then there exists a nested sequence of vector subbundles $\widetilde E_r$
of $E_0=\widetilde E_0$ which are for all $r\geq2$ of finite rank and endowed with canonical flat
connections $\widetilde\nabla_{\!r}$. This sequence stabilizes
for sufficiently great $r$. For any $r$, there is some canonical isomorphism
$E_r\cong\widetilde E_r$ with the corresponding term of the Leray spectral sequence, for $r\geq2$ it makes $\nabla_{\!r}$ and $\widetilde\nabla_{\!r}$ correspond to each other. All the $\widetilde E_r$ are naturally endowed
with the restriction of the $L^2$ hermitian inner product on ${\mathcal E}_{M/S}$
(which needs here to be obtained from some riemannian submersion metric).
Finally for any $r$, let $\widetilde d^*_r$ be the adjoint of the bundle endomorphism $\widetilde d_r$ corresponding
to the operator $d_r$ of the spectral sequence, and define $\widetilde{\mathcal D}_r=\widetilde d_r+\widetilde d^*_r$, then $\widetilde E_{r+1}:={\rm{Ker}}\widetilde{\mathcal D}_r$.

For $r=0$, $d_0\cong d^{M/B}$, so that $E_1$ identifies throw fibral Hodge theory with $\widetilde E_1=\Omega(B\!/\!S,{\rm{Ker}}{\mathcal D}^{V})
\cong\Omega(B\!/\!S,G^\bullet)$ in the notations of the preceding paragraph. Thus $\widetilde E_1$ identifies with vertical differential forms with values in $\pi_{1!}F$, where ``vertical'' is to understand with respect to the fibration $\pi_2$.
Let $\widetilde p_1$ be the orthogonal projection of $E_0$ onto
$\widetilde E_1$; then $\widetilde d_1=\widetilde p_1 d^H$ acting on $\widetilde E_1$, so that $\widetilde E_2={\rm{Ker}}
(p_1{\mathcal D}^H\vert_{\widetilde E_1})$
identifies with fibral harmonic $G^\bullet$-valued differential forms, hence
with $\pi_{2!}(\pi_{1!}F)$.

For any $r\geq2$, $\widetilde E_r$ can be described as follows (\cite{Ma}
Proposition 2.1):
\begin{equation}\label{Virginie} 
\begin{aligned}\widetilde E_r\, =\, \big\{&s_0\in{\mathcal E}_{M/S}\ {\text{ such that there exists }}s_1,s_2,\ldots,s_{r-1}\in{\mathcal E}_{M/S}{\text{ verifying}}\\
&\ \ {\mathcal D}^Vs_0=0,\ {\mathcal D}^Hs_0+{\mathcal D}^Vs_1=0\ {\text{ and }}\\
&\ \ \ \  (\iota_T+\iota^*_T)s_{i-2}+{\mathcal D}^Hs_{i-1}+{\mathcal D}^Vs_i=0\ {\text{ for any }}2\leq i\leq r-1\big\}
\end{aligned}
\end{equation}
Then in this description $\widetilde{\mathcal D}_r s_0=\widetilde p_r((\iota_T+\iota^*_T)s_{r-2}+{\mathcal D}^Hs_{r-1})$, where $\widetilde p_r$
is the orthogonal projection of ${\mathcal E}_{M/S}$ onto $\widetilde E_r$.
One can then prove along the same lines as in \cite{AlBerthomieuBismut}
\S VI (a) (especially formulae (6.13) and (6.15)) that
\begin{align*}
d^{M/B}s_0=0,\ &d^Hs_0+d^{M/B}s_1=0,\\ 
\iota_Ts_{i-2}&+d^Hs_{i-1}+d^{M/B}s_i=0\ {\text{ for any }}2\leq i\leq r-1\\
{\text{ and }}\ &\widetilde d_rs_0=\widetilde p_r(\iota_T s_{r-2}+d^Hs_{r-1})
\end{align*}

Use now the convergence of the resolvent $\big(\lambda-(\frac1\theta\big)^{r-1}
{\mathcal D}^\theta\big)^{-1}$ (here both $\eta$ and $\psi$ vanish) to
$\widetilde p_r(\lambda-\widetilde{\mathcal D}_r)^{-1}\widetilde p_r$
(\cite{Ma} Theorem 2.2) for sufficiently large $r$.
One can deduce that the orthogonal projection $p_\theta$ of ${\mathcal E}_{M/S}$
onto ${\rm{Ker}}{\mathcal D}^\theta$ converges at $\theta=0$ to
$\widetilde p_r$.
In other words ${\rm{Ker}}{\mathcal D}^\theta$ is the restriction to $S\times(0,1]$ of some vector bundle on
$S\times[0,1]$ whose restriction to $S\times\{0\}$ is $\widetilde E_\infty$.
There is a bigrading on ${\mathcal E}_{M/S}$, from \eqref{E:isomabove}
according to horizontal (i.e. corresponding to $\Omega^\bullet$) and vertical
(corresponding to the grading of ${\mathcal E}_{M/B}$) degrees.
$\widetilde E_\infty$ decomposes with respect to this bigrading
\cite{AlBerthomieuBismut} theorem 6.1. Consider some $s_0\in\widetilde
E_\infty^{p,q}$ and call $s_i^{p+i,q-i}$ for any $i$ the corresponding component of the $s_i$ introduced in \eqref{Virginie}.
The above description of $\widetilde d_r$ proves that for any sufficiently large $r$ the differential form $s_0+s_1^{p+1,q-1}+\ldots s_r^{p+r,q-r}$ is closed. According to the scaling appearing in \eqref{VFrontoni}
the section $p_\theta(s_0+\theta s_1^{p+1,q-1}+\theta^2 s_2^{p+2,q-2}\ldots \theta^rs_r^{p+r,q-r})$ is the rescaled harmonic form corresponding to some fixed cohomology class. Its obvious convergence to $s_0$ at $\theta=0$ proves that the isomorphism between ${\rm{Ker}}{\mathcal D}^1$ ad $\widetilde E_\infty$ provided by the parallel transport along $[0,1]$
exactly corresponds to the isomorphism
$[H^\bullet,\nabla_{\!H}]\cong[E^\bullet_r,\nabla_{\!r}]$ obtained at the end
of \S\ref{Nadia} from the exact sequences \eqref{Christine}.

In the notations of paragraph \ref{llink}, ${\mathcal K}={\rm{Ker}}{\mathcal D}^\theta$, so that one can take here $\varepsilon_1=0$.
The convergence of  the resolvent $\big(\lambda-(\frac1\theta\big)^{r-1}
{\mathcal D}^\theta\big)^{-1}$ to
$\widetilde p_r(\lambda-\widetilde{\mathcal D}_r)^{-1}\widetilde p_r$
(\cite{Ma} Theorem 2.2) for any $r$ gives the following description of
the vector bundle ${\mathcal G}$ over $S\times[0,\varepsilon_1]$: its restriction to $S\times\{\theta\}$
is the direct sum of eigenspaces of ${\mathcal D}_\theta$
corresponding to ``little'' modulus eigenvalues while its restriction to
$S\times\{0\}$ is the direct sum of the $\widetilde E_r$, each $\widetilde E_r$
corresponding to eigenspaces associated to eigenvalues of order less than or equal to $\theta^{r-1}$. For any positive $\theta$, $({\mathcal G},d^\theta)$ form a complex whose cohomology is ${\mathcal K}$. Accordingly, the canonical link between ${\mathcal G}^+-{\mathcal G}^-$ and ${\mathcal K}^+-{\mathcal K}^-$ is provided either by formula (5)
(using $d^\theta$) or (7) (using ${\mathcal D}^\theta$) of \cite{MoiPartie1}.
In the same way, the canonical link berween $E^+_{r-1}-E^-_{r-1}$ and $E^+_r-E^-_r$ is obtained from $d_r$ by formula (5) of \cite{MoiPartie1} and it corresponds to the canonical link between $\widetilde E^+_{r-1}-\widetilde E^-_{r-1}$ and $\widetilde E^+_r-\widetilde E^-_r$ obtained from $\widetilde{\mathcal D}_r$
by formula (7) of \cite{MoiPartie1}. The convergence of the resolvents also prove that the operator $(\frac1\theta)^{r-1}{\mathcal D}^\theta$ on the suitable eigensubspace
converges to $\widetilde {\mathcal D}_r$, and accordingly for $(\frac1\theta)^{r-1}d^\theta$ and $\widetilde d_r$. Thus, the canonical (topological) link
between ${\mathcal G}^+-{\mathcal G}^-$ and ${\mathcal K}^+-{\mathcal K}^-$
converges to the canonical (sheaf theoretical) link between $E_2^+-E_2^-$ and $E^+_\infty-E^-_\infty$ considered in the preceding paragraph.

Combining this with the considerations on the limit up to $\theta=0$ of ${\rm{Ker}}{\mathcal D}^\theta$ proves the proposition.
\end{proof}
\section{Relative $K$-theory:}\label{relat}
\begin{theorem*}
$\pi_{2*}\circ\pi_{1*}=(\pi_2\circ\pi_1)_*\in{\rm{Hom}}\big(K^0_{\rm{rel}}(M),
K^0_{\rm{rel}}(S)\big)$
\end{theorem*}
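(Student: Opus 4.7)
The plan is to unpack the definition of $\pi_*$ in relative $K$-theory from \cite{MoiPartie1} and see that each ingredient of the composition $\pi_{2*}\circ\pi_{1*}$ has already been matched with the corresponding ingredient of $(\pi_2\circ\pi_1)_*$ in the previous two sections. Recall that a class in $K^0_{\rm{rel}}(M)$ is represented by a flat bundle $(F,\nabla_{\!F})$ together with auxiliary data, and that the direct image combines the construction of a topological representative of $\pi_!^{\rm{Eu}}[F]$ coming from fibral Dirac theory (section \ref{topol}) with the flat bundle $(\pi_{1!}F,\nabla_{\!\pi_{1!}F})$ and a link between their underlying topological bundles.

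First I would write out explicitly the class $\pi_{2*}\bigl(\pi_{1*}[F,\nabla_{\!F},\ldots]\bigr)$. Applying $\pi_{1*}$ produces a relative class on $B$ whose flat part is $[G^\bullet,\nabla_{\!G^\bullet}]$ (the cohomology along $\pi_1$) and whose topological part is represented by fibral harmonic forms together with the appropriate link. Applying $\pi_{2*}$ to this class produces a class on $S$ whose flat part is $\pi_{2!}[G^\bullet,\nabla_{\!G^\bullet}]\cong[E_2^+,\nabla_2^+]-[E_2^-,\nabla_2^-]$ and whose topological part uses the bundles ${\mathcal G}^{\pm}$ built in \S\ref{VirginieFrontoni}; the global link is the composition of the two intermediate links. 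On the other hand, $(\pi_2\circ\pi_1)_*[F,\nabla_{\!F},\ldots]$ has flat part $[H^\bullet,\nabla_{\!H}]$ and topological part built directly from the bundles ${\mathcal K}^{\pm}$ of \S\ref{llink}.

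The next step is to compare these two relative classes. On the flat side, the equality $\pi_{2!}\circ\pi_{1!}=(\pi_2\circ\pi_1)_!$ proved in \S\ref{flat} identifies $[E_2^+]-[E_2^-]$ with $[H^+]-[H^-]$ in $K^0_{\rm{flat}}(S)$; moreover the identification is realised by a specific sheaf-theoretic link $[\ell_{\rm{flat}}]$ coming from \eqref{Christine} and the collapse of the Leray spectral sequence. On the topological side, the canonical link $[\ell_{\rm{top}}]$ constructed in \S\ref{llink} identifies the representative ${\mathcal G}^{\pm}\oplus\zeta^{\mp}\oplus\mu^{\mp}$ of $\pi_{2}^{\rm{Eu}}(\pi_{1}^{\rm{Eu}}[\xi])$ with the representative ${\mathcal K}^{\pm}\oplus\lambda^{\mp}\oplus\mu^{\mp}$ of $(\pi_2\circ\pi_1)^{\rm{Eu}}[\xi]$, with $\xi$ the underlying bundle of $F$. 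The compatibility of the three links (the one in the definition of $\pi_{2*}\circ\pi_{1*}$, the one in the definition of $(\pi_2\circ\pi_1)_*$, and $[\ell_{\rm{top}}]$) reduces, modulo the equivalence relation defining $K^0_{\rm{rel}}$, to showing $[\ell_{\rm{top}}]=[\ell_{\rm{flat}}]$.

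The main obstacle is exactly this compatibility of links, and it has already been overcome: it is the content of Proposition \ref{flatop}. Once that proposition is invoked, the difference class $\pi_{2*}\circ\pi_{1*}[F,\ldots]-(\pi_2\circ\pi_1)_*[F,\ldots]$ is represented, in the relative $K$-theory picture of \cite{MoiPartie1}, by a pair of topological representatives which are identified both by $[\ell_{\rm{top}}]$ and by $[\ell_{\rm{flat}}]$ via the same flat identification of $\pi_{2!}\pi_{1!}[F]$ with $(\pi_2\circ\pi_1)_![F]$. The two links being equal, the difference is trivial in $K^0_{\rm{rel}}(S)$, which establishes the theorem. The only remaining verification is the routine bookkeeping that the auxiliary bundles $\eta^{\pm},\zeta^{\pm},\lambda^{\pm},\mu^{\pm}$ appearing in the intermediate constructions cancel correctly — this is handled by the independence-on-choices argument already given at the end of section \ref{topol} together with the additivity of the flat Leray identification with respect to suitable-data cokernel corrections.
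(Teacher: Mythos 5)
Your overall strategy matches the paper's: reduce the functoriality of $\pi_*$ on $K^0_{\rm rel}$ to the flat functoriality of \S\ref{flat} and to the link-comparison $[\ell_{\rm top}]=[\ell_{\rm flat}]$ of Proposition~\ref{flatop}. But there is a genuine gap in the middle of your argument. You state that ``the compatibility of the three links (the one in the definition of $\pi_{2*}\circ\pi_{1*}$, the one in the definition of $(\pi_2\circ\pi_1)_*$, and $[\ell_{\rm top}]$) reduces \ldots\ to showing $[\ell_{\rm top}]=[\ell_{\rm flat}]$,'' and then immediately assert that this compatibility ``has already been overcome: it is the content of Proposition~\ref{flatop}.'' That is not the case. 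Proposition~\ref{flatop} only says $[\ell_{\rm top}]=[\ell_{\rm flat}]$ for a single flat bundle; it does not by itself establish the commutativity of the square involving the links $[\ell]$, $[\ell']$ (coming from the two direct image constructions applied to a relative class $(E,\nabla_{\!E},F,\nabla_{\!F},f)$) and the topological links $[\ell^E_{\rm top}]$, $[\ell^F_{\rm top}]$ for $E$ and $F$ separately. The paper fills this in by an explicit interpolation argument: pull back $E$ to $M\times[0,1]$ with a connection $\widetilde\nabla$ joining $\nabla_{\!E}$ to $f^*\nabla_{\!F}$, apply the naturality of the canonical topological link over $M\times[0,1]$, and use that both $[\ell]$ and $[\ell']$ are given by parallel transport along $[0,1]$. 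This is precisely what lets one rewrite the difference as
\[
\big(\pi_{2!}\pi_{1!}(E,\nabla_{\!E}),(\pi_2\circ\pi_1)_!(E,\nabla_{\!E}),[\ell^E_{\rm top}]\big)-\big(\pi_{2!}\pi_{1!}(F,\nabla_{\!F}),(\pi_2\circ\pi_1)_!(F,\nabla_{\!F}),[\ell^F_{\rm top}]\big).
\]
Without this step your claimed reduction is unsupported.

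A second, smaller imprecision: your final sentence says ``the two links being equal, the difference is trivial in $K^0_{\rm rel}(S)$.'' Equality of two links is not by itself a reason for a relative class to vanish. What makes each of the two terms above vanish is that $[\ell^E_{\rm top}]=[\ell^E_{\rm flat}]$ \emph{and} that $[\ell_{\rm flat}]$ is built entirely out of exact sequences of flat bundles (the complexes $d_r$ and the filtration \eqref{Christine}); such links are precisely the ones killed by relation~(iii) of definition~1 of \cite{MoiPartie1}. You should make that mechanism explicit rather than attributing the vanishing to the coincidence $[\ell_{\rm top}]=[\ell_{\rm flat}]$ alone. Finally, be careful with the structure of a relative class: it is a pair of \emph{flat} bundles together with a topological link, not a ``flat part plus topological part''; your description of $\pi_{1*}[F,\ldots]$ as having a separate ``topological part \ldots\ represented by fibral harmonic forms'' conflates the topological representative used to build the link with the data of the relative class itself.
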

\begin{proof}
Take some $(E,\nabla_{\!E},F,\nabla_{\!F},f)\in K^0_{\text{rel}}(M)$,
then $\pi_{2*}\circ\pi_{1*}(E,\nabla_{\!E},F,\nabla_{\!F},f)$ is of the form
$\big(\pi_{2!}\circ\pi_{1!}(E,\nabla_{\!E}),\pi_{2!}\circ\pi_{1!}(F,\nabla_{\!F}),
[\ell]\big)$ while $(\pi_2\circ\pi_1)_*(E,\nabla_{\!E},F,\nabla_{\!F},f)$ is of
the form $\big((\pi_2\circ\pi_1)_!(E,\nabla_{\!E}),(\pi_2\circ\pi_1)_!(F,\nabla_{\!F}),
[\ell']\big)$ where $[\ell]$ and $[\ell']$ are suitable classes of links.

Consider the pull-back $\widetilde E$ of $E$ to $M\times[0,1]$ with some connection
$\widetilde \nabla$ whose restrictions on $M\times\{0\}$ and $M\times\{1\}$
respectively equal $\nabla_{\!E}$ and $f^*\nabla_{\!F}$. There is a canonical (topological) class of link $[\widetilde\ell]$ between one-step and two-step
direct images of $\widetilde E$ whose restrictions to $M\times\{0\}$
and $M\times\{1\}$ coincide with $[\ell^E_{\rm{top}}]$ and $[\ell^F_{\rm{top}}]$ (with obvious notations from the preceding subsection,
this is because of the naturality of $[\ell_{\rm{top}}]$). Now $[\ell]$ and
$[\ell']$ both correspond to the parallel transport along $[0,1]$ for
one step and two step direct image respectively. It follows that
\begin{align*}
\pi_{2*}&\circ\pi_{1*}(E,\nabla_{\!E},F,\nabla_{\!F},f)-(\pi_2\circ\pi_1)_*(E,\nabla_{\!E},F,\nabla_{\!F},f)=\\&\qquad\quad=\big(\pi_{2!}\circ\pi_{1!}(E,\nabla_{\!E}),(\pi_2\circ\pi_1)_!(E,\nabla_{\!E}),
[\ell_{\rm{top}}^E]\big)\\&\qquad\qquad\qquad\qquad\quad\qquad\qquad-\big(\pi_{2!}\circ\pi_{1!}(F,\nabla_{\!F}),(\pi_2\circ\pi_1)_!(F,\nabla_{\!F}),
[\ell^F_{\rm{top}}]\big)\end{align*}
But in  both cases $[\ell_{\rm{top}}]=[\ell_{\rm{flat}}]$ and $\ell_{\rm{flat}}$ is only obtained from exact sequences in the category of flat vector bundles (either from the flat complex associated to $d_r$ or from \eqref{Christine}).
Thus both terms in the right hand side of the above equation vanish, and this proves the theorem.
\end{proof}
\section{Multiplicative and free multiplicative $K$-theory:}\label{mult}
Consider the vector bundles $\xi$ on $M$, $F^+$ and $F^-$ on
$B$ and $G^+$ and $G^-$ on $S$ (with connections $\nabla_{\!\xi}$, $\nabla_{\!F^+}$, $\nabla_{\!F^-}$,
$\nabla_{\!G^+}$ and $\nabla_{\!G^-}$) such that
\[[F^+]-[F^-]=\pi_{1*}^{\rm{Eu}}[\xi]\in K^0_{\rm{top}}(B)\quad{\text{ and }}\quad
[G^+]-[G^-]=(\pi_2\circ\pi_1)_*^{\rm{Eu}}[\xi]\in K^0_{\rm{top}}(S)\]
Choose some smooth supplementary subbundle $T^H\!M\!/\!S$ of $TM\!/\!S$
in $TM$, such that $T^H\!M\!/\!S\cap TM\!/\!B=T^H\!M$; then $\pi_{1*}T^H\!M\!/\!S$ is a smoth supplementary subbundle of $TB\!/\!S$
in $TB$. One can define connections $\nabla_{\!TM/B}$, $\nabla_{\!TM/S}$
and $\nabla_{\!TB/S}$ on $TM\!/\!B$, $TM\!/\!S$ and $TB\!/\!S$ as at
the beginning of the proof lemma 12 of \cite{MoiPartie1} from the choices of horizontal subspaces $T^H\!M$, $T^H\!M\!/\!S$ and $\pi_{1*}T^H\!M\!/\!S$ respectively. Let $[\ell_F]$
and $[\ell_G]$ be equivalence classes of links between either $F^+-F^-$ or $G^+-G^-$ and bundles provided with the families analytic index construction
(as in definition 7 of \cite{MoiPartie2}), and denote
$\tau_1=\tau(\nabla_{\!\xi}, \nabla_{\!TM/B}, \nabla_{\!F^+},\nabla_{\!F^-},[\ell_F])$ and
$\tau_{12}=\tau(\nabla_{\!\xi}, \nabla_{\!TM/S}, \nabla_{\!G^+},\nabla_{\!G^-},[\ell_G])$.
Thus
\begin{align*}\pi_{1!}^{\rm{Eu}}(\xi,\nabla_{\!\xi},\alpha)=\left(F^+,\nabla_{\!F^+},\int_{M/B}
e(\nabla_{\!TM/B})\alpha\right)-(F^-,\nabla_{\!F^-},
\tau_1)\in\widehat K_{\rm{ch}}(B)\\
{\text{and }}\quad(\pi_2\circ\pi_1)_!^{\rm{Eu}}(\xi,\nabla_{\!\xi},\alpha)=\qquad\qquad\qquad\qquad\qquad\qquad\qquad\quad\qquad\qquad\qquad\\=\left(G^+,\nabla_{\!G^+},\int_{M/S}
e(\nabla_{\!TM/S})\alpha\right)-(G^-,\nabla_{\!G^-},
\tau_{12})\in\widehat K_{\rm{ch}}(S)
\end{align*}
Take vector bundles $H^{++}$, $H^{+-}$, $H^{-+}$ and $H^{--}$ on $S$ (with connections $\nabla_{\!++}$, $\nabla_{\!+-}$, $\nabla_{\!-+}$ and $\nabla_{\!--}$) such that $\pi_{2*}[F^\pm]=[H^{\pm+}]-[H^{\pm-}]\in K^0_{\rm{top}}(S)$. Consider some equivalence class of links $[\ell_+]$ and $[\ell_-]$ between $H^{\pm+}-H^{\pm-}$ and bundles provided with the families analytic index construction and denote by
$\tau_\pm$ the forms $\tau(\nabla_{\!F^\pm},\nabla_{\!TB/S},\nabla_{\!\pm+},\nabla_{\!\pm-},[\ell_\pm])$. Then
\begin{align*}
\pi_{2!}^{\rm{Eu}}\big(\pi_{1!}^{\rm{Eu}}(\xi,\nabla_{\!\xi},\alpha)\big)=
\left(H^{++},\nabla_{\!++},\int_{B/S}e(\nabla_{\!TB/S})\int_{M/B}e(\nabla_{\!TM/B})\alpha\right)\qquad\qquad\\-(H^{+-},\nabla_{\!+-},\tau_+)
-\left(H^{-+},\nabla_{\!-+},\int_{B/S}e(\nabla_{\!TB/S})\tau_1\right)
+(H^{--},\nabla_{\!--},\tau_-)
\end{align*}
Now $G^+-G^-$ and $(H^{++}\oplus H^{--})-(H^{+-}\oplus H^{-+})$ are linked throw
$[\ell_G]$, $[\ell_+]$, $[\ell_-]$ and the construction of \S\ref{llink}. Call
$[\ell_{\rm{top}}]$ the resulting link and $\widetilde{\rm{ch}}([\ell_{\rm{top}}])$ the associated Chern-Simons form as in
\S2.2.5 of \cite{MoiPartie1}, then
\begin{align*}
\pi_{2!}^{\rm{Eu}}\big(\pi_{1!}^{\rm{Eu}}(\xi,\nabla_{\!\xi},\alpha)\big)=
\left(G^+,\nabla_{\!G^+},\int_{M/S}\pi_1^*\big(e(\nabla_{\!TB/S})\big) e(\nabla_{\!TM/B})\alpha\right)\qquad\qquad\quad\\
-\left(G^-,\nabla_{\!G^-},\tau_+-\tau_--\widetilde{\rm{ch}}([\ell_{\rm{top}}])+\int_{B/S}e(\nabla_{\!TB/S})\tau_1\right)
\end{align*}
Choose any supplementary subbundle of $T^H\!M$ in $T^H\!M\!/\!S$, it then identifies with $\pi_1^*TB\!/\!S$ and is endowed with the connection $\pi_1^*\nabla_{\!TB/S}$.
Denote by $\widetilde e_{M/B/S}$ the form $\widetilde e(\nabla_{\!TM/S},\nabla_{\!TM/B}\oplus\pi_1^*\nabla_{!TB/S})$
defined in \S2.2.2 of \cite{MoiPartie1}, then the following form
\[\widetilde e_{M/B/S}d\alpha
+\big(e(\nabla_{\!TM/S})-\pi_1^*\big(e(\nabla_{\!TB/S})\big) e(\nabla_{\!TM/B})\big)\alpha\]
is exact so that in $\widehat K_{\rm{ch}}(S)$:
\begin{align*}
\pi_{2!}^{\rm{Eu}}\big(\pi_{1!}^{\rm{Eu}}(\xi,\nabla_{\!\xi},\alpha)\big)=\left(G^+,\nabla_{\!G^+},\int_{M/S}
e(\nabla_{\!TM/S})\alpha\right)-(G^-,\nabla_{\!G^-},
\widetilde\tau_{12})\\
{\text{with }}\qquad\widetilde\tau_{12}=\tau_+-\tau_--\widetilde{\rm{ch}}([\ell_{\rm{top}}])+\int_{B/S}e(\nabla_{\!TB/S})\tau_1-\int_{M/S}\widetilde e_{M/B/S}d\alpha
\end{align*}
\begin{theorem*}
The restriction to $MK^0(M)$ of $\ \pi_{2!}^{\rm{Eu}}\circ\pi_{1!}^{\rm{Eu}}$ and $(\pi_2\circ\pi_1)_!^{\rm{Eu}}$
coincide.
\end{theorem*}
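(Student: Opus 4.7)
The computations preceding the theorem display both $\pi^{\rm{Eu}}_{2!}\circ\pi^{\rm{Eu}}_{1!}(\xi,\nabla_{\!\xi},\alpha)$ and $(\pi_2\circ\pi_1)^{\rm{Eu}}_!(\xi,\nabla_{\!\xi},\alpha)$ as elements of $\widehat K_{\rm{ch}}(S)$ whose positive parts $\big(G^+,\nabla_{\!G^+},\int_{M/S}e(\nabla_{\!TM/S})\alpha\big)$ are identical. The two elements therefore differ only by the formal difference $(G^-,\nabla_{\!G^-},\tau_{12})-(G^-,\nabla_{\!G^-},\widetilde\tau_{12})$, which vanishes in $MK^0(S)$ provided $\widetilde\tau_{12}-\tau_{12}$ lies in the image of $d$. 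Reducing the theorem to this exactness statement is the first step.

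The second step is to invoke the characterization of $\tau$ from \S2.7.2 of \cite{MoiPartie2}: once $(\nabla_{\!\xi},\nabla_{\!TM/S},\nabla_{\!G^+},\nabla_{\!G^-},[\ell_G])$ is fixed, the form $\tau_{12}$ is uniquely determined, modulo exact forms, by a universal formula for its exterior differential. Thus it suffices to prove the pointwise equality $d\widetilde\tau_{12}=d\tau_{12}$ of differential forms on $S$. This will be done by assembling four ingredients already available: the transgression identity for $d\tau_1$ coming from the one-step direct image along $\pi_1$; the transgression identities for $d\tau_\pm$ coming from the one-step direct images of $F^\pm$ along $\pi_2$; the transgression $d\widetilde{\rm{ch}}([\ell_{\rm{top}}])$ of the Chern--Simons form associated in \S2.2.5 of \cite{MoiPartie1} to the composite topological link constructed in \S\ref{llink}; and the identity $d\widetilde e_{M/B/S}=e(\nabla_{\!TM/S})-\pi_1^*\bigl(e(\nabla_{\!TB/S})\bigr)e(\nabla_{\!TM/B})$ from \S2.2.2 of \cite{MoiPartie1}.

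Substituting these four formulae into the definition of $\widetilde\tau_{12}$, commuting $d$ with the fibre integrations $\int_{M/B}$, $\int_{B/S}$, $\int_{M/S}$ and using the compatibility $\int_{M/S}=\int_{B/S}\int_{M/B}$ (Fubini, with no boundary contribution since the fibres are closed), the Chern characters of the intermediate bundles $F^\pm$ and $H^{\pm\pm}$ cancel in pairs, the $\widetilde e_{M/B/S}$ term reconciles the Euler class $e(\nabla_{\!TM/S})$ with its two-step counterpart $\pi_1^*(e(\nabla_{\!TB/S}))e(\nabla_{\!TM/B})$, and the residual difference of Chern characters is precisely what $d\widetilde{\rm{ch}}([\ell_{\rm{top}}])$ absorbs by definition of the Chern--Simons form of a link. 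What remains is the universal expression in $\nabla_{\!\xi}$, $\nabla_{\!TM/S}$, $\nabla_{\!G^\pm}$, $\alpha$ and $d\alpha$ that the characterization of \S2.7.2 of \cite{MoiPartie2} prescribes for $d\tau_{12}$, and the proof is complete.

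The main obstacle is the third step: verifying that $\widetilde{\rm{ch}}([\ell_{\rm{top}}])$ does transgress the expected combination of Chern characters. This requires functoriality of the Chern--Simons construction of \S2.2.5 of \cite{MoiPartie1} under composition of links, applied to the explicit description of $[\ell_{\rm{top}}]$ as the concatenation of $[\ell_G]$, $[\ell_\pm]$ and the topological link produced by the two-step analytic index analysis of \S\ref{llink}; once this bookkeeping is carried out, the cancellation in the previous paragraph is automatic.
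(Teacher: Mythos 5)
Your first reduction is correct and matches the paper: the positive parts agree, so the theorem reduces to showing that $\tau_{12}$ and the explicitly computed $\widetilde\tau_{12}$ (equivalently, after removing the $d\alpha$ term by degree considerations, the form the paper calls $\overset\approx\tau_{12}$) coincide. You also correctly identify the characterization of \S2.7.2 of \cite{MoiPartie2} as the tool to finish.

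However, there is a genuine gap in how you apply that characterization. You assert that $\tau_{12}$ is ``uniquely determined, modulo exact forms, by a universal formula for its exterior differential,'' and from this you conclude that it suffices to check $d\widetilde\tau_{12}=d\tau_{12}$. That is not what the characterization of \S2.7.2 of \cite{MoiPartie2} says, and it cannot be: a prescription for $d\tau$ determines $\tau$ only up to \emph{closed} forms, not exact ones, and in $\widehat K_{\rm ch}(S)$ a closed non-exact discrepancy does not vanish. The actual characterization requires four properties simultaneously: additivity (lemma 8 of \cite{MoiPartie2}), functoriality under pullback along fibered products, the transgression identity (formula (12) of \cite{MoiPartie2}), \emph{and} vanishing when $(\xi,\nabla_{\!\xi})$ is flat. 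Your proof plan only addresses the transgression identity and never touches the flat case.

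The flat case is precisely where all the hard work of this paper enters. When $(\xi,\nabla_{\!\xi})$ is flat, the forms $\tau_\pm$, $\tau_1$ and the term involving $\widetilde e_{M/B/S}$ vanish for elementary reasons, but the vanishing of $\widetilde{\rm ch}([\ell_{\rm top}])$ is nontrivial: it requires knowing that the analytically constructed link $[\ell_{\rm top}]$ coincides with the sheaf-theoretic link $[\ell_{\rm flat}]$ built from the Leray spectral sequence — this is Proposition \ref{flatop}, the central result of Sections \ref{topol} and \ref{flat} — combined with lemma 6 of \cite{MoiPartie1}, which kills Chern--Simons transgressions coming from exact sequences of flat bundles. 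Your plan never invokes Proposition \ref{flatop}, and indeed your ``main obstacle'' (functoriality of $\widetilde{\rm ch}$ under composition of links, to verify the differential) is a routine bookkeeping step; the genuine obstacle is the flat-bundle vanishing, which your route bypasses entirely and hence cannot close the proof.
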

\begin{proof}
For any $(\xi,\nabla_{\!\xi},\alpha)\in MK_0(M)$, one has $d\alpha={\rm{ch}}
(\nabla_{\!\xi})-{\rm{rk}}\xi$ but for degree reasons $\int_{M/S}\widetilde e_{M/B/S}$ vanishes (the degree of this form equals ${\rm{dim}}M-{\rm{dim}}S-1$).
So, the problem is reduced to the equality of $\tau_{12}$ and of
\[\overset\approx\tau_{12}=\tau_+-\tau_--\widetilde{\rm{ch}}([\ell_{\rm{top}}])+\int_{B/S}e(\nabla_{\!TB/S})\tau_1+\int_{M/S}\widetilde e_{M/B/S}{\rm{ch}}
(\nabla_{\!\xi})\]
for any $(\xi,\nabla_{\!\xi},\alpha)\in MK^0(M)$.

It is easily verified that $\overset\approx\tau_{12}$ 
is additive in the sense of lemma 8 of \cite{MoiPartie2}, is functorial by
pullbacks over fibered products (with double fibration structure!);
a direct calculation proves that it verifies the same transgression formula as $\tau_{12}$ (see
formula (12) of \cite{MoiPartie2}).
Moreover, in the case of a flat bundle $(\xi,\nabla_{\!\xi})$, $F^\pm$ here correspond to $G^\pm$ in \S\ref{Nadia}, $G^\pm$ here
correspond to $H^\pm$ of \S\ref{Nadia}, and $H^{\pm\pm}$ here correspond to $E_2^{\pm\pm}$ of \S\ref{Nadia}, and in any case, the suitable data
are taken trivial because all bundles are flat. Thus all the forms $\tau_+$,
$\tau_-$ and $\tau_1$ vanish (see lemma 8 of \cite{MoiPartie2}), ${\rm{ch}}(\nabla_{\!\xi})={\rm{rk}}\xi$ so that
the integral involving $\widetilde e_{M/B/S}$ vanishes, and $\widetilde{\rm{ch}}
([\ell_{\rm{top}}])$ also vanishes, because of proposition \ref{flatop} and lemma 6 of \cite{MoiPartie1} (and the description of $[\ell_{\rm{flat}}]$
of proposition \ref{flatop} as obtained fom exact sequences of flat bundles).

The coincidence of $\tau_{12}$ and $\overset\approx\tau_{12}$ for elements of $MK_0(M)$ is then obtained from the caracterisation of theorem of \S2.7.2 in \cite{MoiPartie2}.
\end{proof}
It is likely that $\overset\approx\tau_{12}=\tau_{12}$ in any case, so that
\[\pi_{2!}^{\rm{Eu}}\big(\pi_{1!}^{\rm{Eu}}(\xi,\nabla_{\!\xi},\alpha)\big)
=\big(0,0,\overset\approx\tau_{12}-\tau_{12}\big)=\left(0,0,\int_{M/S}\widetilde e_{M/B/S}\widehat{\rm{ch}}(\xi,\nabla_{\!\xi},\alpha)\right)\]
for any $(\xi,\nabla_{\!\xi},\alpha)\in\widehat K_{\rm{ch}}(M)$
(where $\widehat{\rm{ch}}(\xi,\nabla_{\!\xi},\alpha)={\rm{ch}}(\nabla_{\!\xi})
-d\alpha$ vanishes on $MK^0(M)$). This formula would be compatible with the preceding theorem and with the anomaly formulas (13) and (14) of
\cite{MoiPartie2}.

\end{document}